\numberwithin{equation}{section}
\newtheorem{theorem}{Theorem}
\newtheorem{lemma}[theorem]{Lemma}
\theoremstyle{definition}
\let\oldproofname=\proofname
\renewcommand{\proofname}{\rm\bf{\oldproofname}}
\def \D {{\mathcal{D}}}
\def \P {{\mathcal {P}}}
\def \a {\alpha}
\def \b {\beta}
\def \E {\varepsilon}
\def \mod#1{{\:({\rm mod}\ #1)}}
\def \deg {{\rm deg}}
\def \case#1 {\noindent {\bf Case #1.}\quad}
\def \floor#1{\lfloor #1 \rfloor}
\renewcommand{\leq}{\leqslant}
\renewcommand{\geq}{\geqslant}
\title{\bf Cycle packings of the complete multigraph}
\author{
Rosalind A. Cameron \\
Department of Mathematics and Statistics\\
Memorial University of Newfoundland\\
St John's, NL\\
\texttt{rahoyte@outlook.com}
}
\date{ }
\begin{document}
\maketitle\thispagestyle{empty}
\def\baselinestretch{1.1}\small\normalsize
\sloppy

\begin{abstract}
\noindent Bryant, Horsley, Maenhaut and Smith recently gave necessary and sufficient conditions for when the complete multigraph can be decomposed into cycles of specified lengths $m_1,m_2,\ldots,m_\tau$.
In this paper we characterise exactly when there exists a packing of the complete multigraph with cycles of specified lengths $m_1,m_2,\dots,m_\tau$. While cycle decompositions can give rise to packings by removing cycles from the decomposition, in general it is not known when there exists a packing of the complete multigraph with cycles of various specified lengths. 
\end{abstract}

\section{Introduction}\label{Section:Intro}

A \emph{decomposition} of a multigraph $G$ is a collection $\mathcal{D}$ of submultigraphs of $G$ such that each edge of $G$ is in {exactly one} of the multigraphs in $\mathcal{D}$. A \emph{packing} of a multigraph $G$ is a collection $\mathcal{P}$ of submultigraphs of $G$ such that each edge of $G$ is in {at most one} of the multigraphs in $\mathcal{P}$. The \emph{leave} of a packing $\P$ is the multigraph obtained by removing the edges in multigraphs in $\P$ from $G$. 
A  \emph{cycle packing} of a multigraph $G$ is a packing $\P$ of $G$ such that each submultigraph in $\P$ is a cycle. 
For positive integers $\lambda$ and $v$, $\lambda K_v$ denotes the complete multigraph with $\lambda$ parallel edges between each pair of $v$ distinct vertices.
Here we give a complete characterisation of when there exists a packing of $\lambda K_v$ with cycles of specified lengths $m_1,m_2,\dots,m_\tau$. Note that for $\lambda\geq 2$, $\lambda K_v$ contains $2$-cycles (pairs of parallel edges).

\begin{theorem}\label{Theorem:LambdaPackings}
Let $m_1,m_2,\ldots,m_\tau$ be a nondecreasing list of integers and let $\lambda$ and $v$ be positive integers.
Then there exists a packing of $\lambda K_v$ with cycles of lengths $m_1,m_2,\ldots,m_\tau$ if and only if 
\begin{itemize}
\item[(i)] $2\leq m_1\leq m_2,\ldots,m_\tau\leq v$;

\item[(ii)] $m_1+m_2+\dots+m_\tau=\lambda\binom{v}{2}-\delta$, where $\delta$ is a nonnegative integer such that $\delta\neq 1$ when $\lambda (v-1)$ is even, $\delta\neq 2$ when $\lambda=1$,  and $\delta\geq \frac{v}{2}$ when $\lambda (v-1)$ is odd; 

\item[(iii)] $\sum_{m_i=2}m_i\leq \left\{\begin{array}{l l}
(\lambda-1)\binom{v}{2}-2& \text{if $\lambda$ and $v$ are odd and $\delta=2$,}\\[1mm]
(\lambda-1)\binom{v}{2}& \text{if $\lambda$ is odd; and}
\end{array}
\right.$ 

\item[(iv)] $m_\tau\leq \left\{\begin{array}{ll}
\frac{\lambda}{2}\binom{v}{2}-\tau+2 & \text{if $\lambda$ is even and $\delta=0$},\\[1mm]
\frac{\lambda}{2}\binom{v}{2}-\tau+1 & \text{if $\lambda$ is even and $2\leq \delta<m_\tau$.}\\
\end{array}\right.$
\end{itemize}
\end{theorem}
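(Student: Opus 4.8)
The plan is to separate the (routine) necessity direction from the (substantial) sufficiency direction, and to attack sufficiency by reducing to the decomposition theorem of Bryant, Horsley, Maenhaut and Smith through a careful choice of leave, which is then deleted from a full decomposition.

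For necessity I would argue as follows. Condition (i) is immediate, since a cycle in $\lambda K_v$ uses between $2$ and $v$ edges. For (ii), every cycle contributes even degree to each vertex, so the leave has degree congruent to $\lambda(v-1)$ at every vertex: when $\lambda(v-1)$ is even the leave is an even multigraph and is hence empty or of size at least $2$ (giving $\delta\neq 1$), and when $\lambda(v-1)$ is odd the leave has odd degree at all $v$ vertices, forcing $\delta\geq v/2$ by the handshake bound. The clause $\delta\neq 2$ for $\lambda=1$ reflects that the simple graph $K_v$ admits no $2$-cycle, so an even leave cannot have exactly two edges. Conditions (iii) and (iv) I would derive by the same edge-counting that underlies the corresponding clauses of the decomposition theorem: (iii) bounds the number of parallel-edge pairs that $2$-cycles may consume when $\lambda$ is odd (the extra $-2$ when $\lambda,v$ are odd and $\delta=2$ again coming from the absence of a two-edge even simple graph), while (iv) is the longest-cycle bound of the decomposition theorem transported to the augmented cycle list described below.

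For sufficiency I would set $\delta=\lambda\binom{v}{2}-\sum_i m_i$, choose the leave explicitly, and complete it to a decomposition so that the decomposition theorem applies and deleting the leave cycles yields the packing. When $\delta=0$ the packing is a decomposition and (i)--(iv) collapse to the hypotheses of the decomposition theorem (note $\delta=0$ forces $\lambda(v-1)$ even). When $\lambda(v-1)$ is even and $\delta\geq 2$, I would take the leave to be a single cycle of length $\delta$ if $2\leq\delta\leq v$, and otherwise a short list of leave cycles summing to $\delta$; appending their lengths to $m_1,\ldots,m_\tau$ yields a list whose lengths sum to $\lambda\binom{v}{2}$, and conditions (iii) and (iv) are essentially the decomposition hypotheses for this augmented list, with the small corrections accounting for the added leave cycle. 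This is exactly why the longest-cycle bound in (iv) drops by one from the $\delta=0$ case to the $2\leq\delta<m_\tau$ case: the augmented list has one more cycle, so the threshold $\frac{\lambda}{2}\binom{v}{2}-\tau+2$ becomes $\frac{\lambda}{2}\binom{v}{2}-\tau+1$. Invoking the decomposition theorem on the augmented list and removing the leave cycles then produces the packing.

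The hard part will be the case $\lambda(v-1)$ odd, that is $\lambda$ odd and $v$ even, where the leave must have odd degree at every vertex and so cannot be obtained by deleting cycles from any decomposition of $\lambda K_v$ (indeed $\lambda K_v$ then has no cycle decomposition at all). Here I would write the leave as $I\cup L'$, where $I$ is a $1$-factor of $K_v$ and $L'$ is a union of leave cycles summing to $\delta-\tfrac{v}{2}$, and reduce to decomposing $\lambda K_v-I$ into the target cycles together with the cycles of $L'$. The main technical obstacle is thus to obtain a cycle-decomposition result for $\lambda K_v$ minus a $1$-factor under the analogous length conditions---either by extracting it from the methods of the decomposition theorem or via an embedding argument---and to verify that the augmented list meets its hypotheses at the boundary values of $\delta$, in particular $\delta=\tfrac{v}{2}$ and the thresholds appearing in (iii) and (iv). Confirming these boundary cases, and that the leave cycles can always be chosen with lengths in $[2,v]$ without breaching the $2$-cycle bound, is where the bulk of the case analysis will lie.
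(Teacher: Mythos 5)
Your necessity argument and your sufficiency strategy for the cases $\lambda(v-1)$ even, and for $\lambda(v-1)$ odd with $\delta\geq \frac{v}{2}+3$ or $\delta=\frac{v}{2}$, coincide with the paper's (append a list $N$ of cycle lengths summing to $\delta$, or to $\varepsilon=\delta-\frac{v}{2}$, invoke the decomposition theorem, and delete the cycles of $N$). But there is a genuine gap at exactly the point the paper identifies as the crux: when $\lambda(v-1)$ is odd and $\delta\in\{\frac{v}{2}+1,\frac{v}{2}+2\}$, your plan of writing the leave as $I\cup L'$ with $L'$ a union of cycles summing to $\varepsilon=\delta-\frac{v}{2}$ cannot be carried out. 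For $\varepsilon=1$ there is no list of cycle lengths summing to $1$, so no packing with such a leave arises by removing cycles from a decomposition of $\lambda K_v-I$; indeed the required leave (all degrees odd, $\frac{v}{2}+1$ edges) may contain no cycle at all, e.g.\ a $K_{1,3}$ plus a matching. For $\varepsilon=2$ the natural fix, appending a $2$-cycle to the list, runs into the $2$-cycle bound of the decomposition theorem: your condition (iii) for $v$ even only guarantees $\sum_{m_i=2}m_i\leq(\lambda-1)\binom{v}{2}$, and when equality holds the augmented list violates the hypothesis $\sum_{m_i=2}m_i\leq(\lambda-1)\binom{v}{2}$ of Theorem~\ref{Theorem:MultigraphDecomp}, so the reduction fails precisely at this boundary. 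Your proposal offers no mechanism for these cases, and no purely ``choose a leave and complete it'' argument can supply one, since these leaves are not of the form $I\cup(\text{cycles})$.

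The paper fills this gap with the cycle-switching machinery of Section~\ref{Section:lambda_prelim} (Lemmas~\ref{Lemma:MultiCycleSwitch}, \ref{Lemma:LassoToCycle_lambda} and \ref{Lemma:ChordToLasso_lambda}): in Case 2 of Lemma~\ref{Lemma:lambda_odd} one decomposes $\lambda K_v-I$ with an $(m+\varepsilon)$-cycle substituted for a suitable entry $m$ of the list, removes that cycle, and then performs edge switches to transform the leave (an $(m+\varepsilon)$-cycle plus the $1$-factor) until it contains an $m$-cycle that can be added back, leaving $\varepsilon$ surplus edges; a separate subcase ($m+\varepsilon>v$) needs two rounds of Lemma~\ref{Lemma:ChordToLasso_lambda}. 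This local-modification step is the paper's main technical contribution and is entirely absent from your outline. Conversely, the obstacle you flag as the hard part---establishing a cycle-decomposition theorem for $\lambda K_v-I$---is not an obstacle at all: that statement is already part of the Bryant--Horsley--Maenhaut--Smith result, quoted verbatim as the second half of Theorem~\ref{Theorem:MultigraphDecomp}, so your effort was aimed at the wrong difficulty.
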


Bryant, Horsley, Maenhaut and Smith  \cite{BHMS15} recently characterised exactly when there exists a decomposition of the complete multigraph $\lambda K_v$ into cycles of specified lengths $m_1,m_2,\dots,m_\tau$ (see also \cite{BHMS11,Smith10}). Since a decomposition of a multigraph is a packing whose leave contains no edges, many instances of the cycle packing problem can be solved by removing cycles from a cycle decomposition $\lambda K_v$. However there are cases which cannot be solved in this manner. These cases occur when $\lambda(v-1)$ is odd and there are $\tfrac{v}{2}+1$ or $\tfrac{v}{2}+2$ edges in the leave of the required packing.

In the case of the complete graph $K_v$ (with $\lambda=1$), it had previously been found exactly when there exist decompositions into cycles of specified lengths \cite{BryHorPet14}. Furthermore, Horsley \cite{Horsley11} found conditions for the existence of packings of the complete graph with uniform length cycles. These results built on earlier results for cycle decompositions and packings of the complete graph \cite{AlsGav01,Balister01_A,ElZanati94,Sajna02_ii} (see \cite{BryRod07} for a survey).  However, even in the  $\lambda =1$ case, a complete characterisation of when there exists a packing of $K_v$ with cycles of lengths $m_1,m_2,\dots,m_\tau$ had not previously been obtained.

We will show that the necessity of conditions (i)--(iv) in Theorem~\ref{Theorem:LambdaPackings} follows from known results for cycle decompositions of $\lambda K_v$. The sufficiency of these conditions is proved by first decomposing $\lambda K_v$ into cycles (and a $1$-factor if $\lambda (v-1)$ is even) and then removing cycles and modifying the resulting packing to obtain the one that we require. The existence of these cycle decompositions of $\lambda K_v$ was obtained by Bryant et al \cite{BHMS15} and the exact result is stated as Theorem~\ref{Theorem:MultigraphDecomp} in Section~\ref{Section:proof}. Section~\ref{Section:lambda_prelim} contains the results required for modifying cycle decompositions.

The following definitions and notation will be used throughout this paper. 
An $(m_1,m_2,\ldots,m_\tau)$-decomposition of $\lambda K_v$ is a decomposition of $\lambda K_v$ into $\tau$ cycles of lengths $m_1,m_2,\ldots,m_\tau$. Similarly, an $(m_1,m_2,\ldots,m_\tau)$-packing of $\lambda K_v$ is a packing of $\lambda K_v$ with $\tau$ cycles of lengths $m_1,m_2,\ldots,m_\tau$. 
We shall write $(m_1^{\ell_1},m_2^{\ell_2}\ldots,m_\tau^{\ell_\tau})$ to denote the list of integers $(\underbrace{m_1,\ldots,m_1}_{\ell_1},\underbrace{m_2,\ldots,m_2}_{\ell_2},\ldots,\underbrace{m_\tau,\ldots,m_\tau}_{\ell_\tau})$. 

For vertices $x$ and $y$ in a multigraph $G$, the \emph{multiplicity} of $xy$ is the number of edges in $G$ which have $x$ and $y$ as their endpoints, denoted $\mu_G(xy)$. If $\mu_G(xy)\leq 1$ for all pairs of vertices in $V(G)$ then we say that $G$ is a \emph{simple} graph. 
A multigraph is said to be \emph{even} if every vertex has even degree and is said to be \emph{odd} if every vertex has odd degree.

Given a permutation $\pi$ of a set $V$, a subset $S$ of $V$ and a multigraph $G$ with $V(G)\subseteq V$, $\pi(S)$ is defined to be the set $\{\pi(x):x \in S\}$ and $\pi(G)$ is defined to be the multigraph with vertex set $\pi(V(G))$ and edge set $\{\pi(x)\pi(y):xy\in E(G)\}$. The $m$-cycle with vertices $x_0,x_1,\ldots,x_{m-1}$ and edges $x_ix_{i+1}$ for $i\in\{0,\ldots,m-1\}$ (with subscripts modulo $m$) is denoted by $(x_0,x_1,\ldots,x_{m-1})$ and the $n$-path with vertices $y_0,y_1,\ldots,y_n$ and edges $y_jy_{j+1}$ for $j\in \{0,1,\ldots,n-1\}$ is denoted by $[y_0,y_1,\ldots,y_n]$. 

A \emph{chord} of a cycle is an edge which is incident with two vertices of the cycle but is not in the cycle. Note that a chord may be an edge parallel to an edge of the cycle.
For integers $p\geq 2$ and $q\geq 1$, a \emph{$(p,q)$-lasso} is the union of a $p$-cycle and a $q$-path such that the cycle and the path share exactly one vertex and that vertex is an end-vertex of the path.  A $(p,q)$-lasso with cycle $(x_1, x_2,\ldots,x_p)$ and path $[x_p, y_1, y_2,\ldots,y_q]$ is denoted by  $(x_1, x_2,\ldots,x_p)[x_p, y_1, y_2,\ldots,y_q]$. The \emph{order} of a $(p,q)$-lasso is $p+q$.

\section{Modifying cycle packings of $\lambda K_v$}\label{Section:lambda_prelim}

The aim of this section is to prove Lemmas~\ref{Lemma:LassoToCycle_lambda} and \ref{Lemma:ChordToLasso_lambda}. These results are useful tools for modifying cycle packings of the complete multigraph. 
The simple graph versions of Lemmas~\ref{Lemma:LassoToCycle_lambda} and \ref{Lemma:ChordToLasso_lambda} are due to Bryant and Horsley \cite{BryHor08} and have been applied to prove the maximum packing result of the simple complete graph with uniform length cycles \cite{Horsley11}.

We require the following cycle switching lemma for cycle packings of multigraphs. Lemma~\ref{Lemma:MultiCycleSwitch} is similar to \cite[Lemma 2.1]{BHMS11} and is also closely related to the cycle switching method which has been applied to simple graphs (see for example \cite{BryHor09}).

\begin{lemma}\label{Lemma:MultiCycleSwitch}
Let $v$ and $\lambda$ be positive integers, let $M$ be a list of integers, let $\P$ be an $(M)$-packing of $\lambda K_v$, let $L$ be the leave of $\P$,  let $\a$ and $\b$ be distinct vertices of $L$, and let $\pi$ be the transposition $(\a\b)$. 
Let $E$ be a subset of $E(L)$ such that, for each vertex $x\in V(L)\setminus\{\a,\b\}$, $E$ contains precisely $\max(0,\mu_L(x\a)-\mu_L(x\b))$ edges with endpoints $x$ and $\a$, and precisely $\max(0,\mu_L(x\b)-\mu_L(x\a))$ edges with endpoints $x$ and $\b$ (so $E$ may contain multiple edges with the same endpoints), and $E$ contains no other edges. 
Then there exists a partition of $E$ into pairs such that for each pair $\{x_1y_1,x_2y_2\}$ of the partition, there exists an $(M)$-packing $\mathcal{P}'$ of $\lambda K_v$ with leave $L'=(L-\{x_1y_1,x_2y_2\})+\{\pi(x_1)\pi(y_1),\pi(x_2)\pi(y_2)\}$. 

Furthermore, if $\P=\{C_1,\ldots,C_t\}$, then $\P'=\{C'_1,\ldots,C'_t\}$ where for $i\in\{1,\ldots,t\}$, $C'_i$ is a cycle of the same length as $C_i$ such that for $i\in\{1,\ldots,t\}$
\begin{itemize}
\item
If neither $\a$ nor $\b$ is in $V(C_i)$, then $C'_i=C_i$;

\item
If exactly one of $\a$ and $\b$ is in $V(C_i)$, then $C'_i=C_i$ or $C'_i=\pi(C_i)$; and

\item
If both $\a$ and $\b$ are in $V(C_i)$, then $C'_i=Q_i\cup Q_i^*$ where $Q_i=P_i$ or $\pi(P_i)$, $Q_i^*=P_i^*$ or $\pi(P_i^*)$, and $P_i$ and $P_i^*$ are the two paths from $\a$ to $\b$ in $C_i$. 
\end{itemize}
\end{lemma}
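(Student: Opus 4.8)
The plan is to realise every switch as the effect of \emph{flipping} a suitable alternating trail in an auxiliary multigraph obtained by suppressing $\a$ and $\b$. First I would record the parity that makes the conclusion possible: since $\pi(\lambda K_v)=\lambda K_v$, applying $\pi$ to $\P$ yields another $(M)$-packing with leave $\pi(L)$, and $E$ is exactly the sub-multigraph on which $L$ exceeds $\pi(L)$ (because $\mu_{\pi(L)}(x\a)=\mu_L(x\b)$). As every cycle through $\a$ uses two edges at $\a$, we get $\deg_L(\a)\equiv\deg_L(\b)\equiv\lambda(v-1)\pmod 2$, so $|E|=\sum_{x}|\mu_L(x\a)-\mu_L(x\b)|$ is even; this will also drop out of the construction.

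Next I would build a multigraph $\Gamma$ on the vertices of $\lambda K_v$ other than $\a,\b$. Delete $\a$ and $\b$ from each cycle of $\P$: a cycle meeting exactly one of $\a,\b$ becomes a single path, and a cycle meeting both becomes the two interior paths of $P_i$ and $P_i^*$; make each such path an edge of $\Gamma$ joining its end-vertices. Each leave edge $x\a$ or $x\b$ with $x\notin\{\a,\b\}$ becomes a pendant edge of $\Gamma$ at $x$ with one free end, and edges $\a\b$ (in cycles or in $L$) are discarded since $\pi$ fixes them. I would label each end of each edge of $\Gamma$ by whether it was attached to $\a$ or to $\b$. The crucial bookkeeping fact is that, since $\lambda K_v$ has exactly $\lambda$ edges $x\a$ and $\lambda$ edges $x\b$, every vertex $x$ of $\Gamma$ carries exactly $\lambda$ ends labelled $\a$ and $\lambda$ labelled $\b$. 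To \emph{flip} an edge of $\Gamma$ means to reattach both its ends to the opposite member of $\{\a,\b\}$; flipping the interior edge of a one-sided cycle turns $C_i$ into $\pi(C_i)$, and flipping the interior edge from $P_i$ replaces $P_i$ by $\pi(P_i)$ inside $C_i$, where a short check shows $\pi(P_i)\cup P_i^*$ is again a single cycle of the same length. Thus flipping any set of interior edges produces cycles of exactly the shape in the ``furthermore''. I would then flip the edges of an \emph{alternating trail}, meaning a trail whose two ends meeting at each internal vertex carry opposite labels: at an internal vertex this toggles one $\a$-end and one $\b$-end, preserving the $\a/\b$ balance at every $x$ and hence keeping a packing of $\lambda K_v$, while the only net change is at the two free ends of the trail, where the two corresponding leave edges move to the other side. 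So flipping an alternating trail whose end-edges are $x_1y_1,x_2y_2\in E$ yields exactly the required $\P'$ and $L'=(L-\{x_1y_1,x_2y_2\})+\{\pi(x_1)\pi(y_1),\pi(x_2)\pi(y_2)\}$.

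The step I expect to be the main obstacle is producing a partition of $E$ into trail-connected pairs, since a priori a trail could terminate at a non-excess leave edge. I would resolve this with a transition system. At each vertex $x$, first locally match $\min(\mu_L(x\a),\mu_L(x\b))$ of the pendant $\a$-ends to pendant $\b$-ends; afterwards the only pendant ends left at $x$ are the $|\mu_L(x\a)-\mu_L(x\b)|$ excess ones, which are precisely the ends of the edges of $E$ at $x$, and the remaining $\a$-ends and $\b$-ends are still equal in number. Choosing any perfect matching of the remaining $\a$-ends to the remaining $\b$-ends at every vertex decomposes the remaining edges into trails; because the matchings join ends of opposite label these trails are alternating, and their only free ends are ends of edges of $E$. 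Hence the open trails pair up $E$ exactly, giving both the desired partition and, by flipping, the switches; in particular each open trail has two free ends, so $|E|$ is even.

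The remaining work is routine verification that I would carry out last: that each flipped cycle stays simple and keeps its length (the single-cycle checks above, extended to the case where both interior edges of one cycle are flipped, which gives $\pi(C_i)$), that the discarded $\a\b$ edges may indeed be ignored, and that the resulting $\P'=\{C_1',\dots,C_t'\}$ matches the three bullet points vertex-by-vertex. I would present $\Gamma$ and the flip operation first, then the transition-system pairing, and finally the bullet-by-bullet identification of $\P'$.
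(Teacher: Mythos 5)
Your proposal is correct, but it takes a genuinely different route from the paper, because the paper offers essentially no proof of this lemma at all: it observes that when $\lambda(v-1)$ is even the statement reduces to \cite[Lemma 2.1]{BHMS11}, and when $\lambda(v-1)$ is odd it asserts that the result ``follows from similar arguments'' to the corresponding case there (the difference being that \cite{BHMS11} switches in a packing of $\lambda K_v-I$ for a $1$-factor $I$, whereas here the odd degrees are absorbed by the leave). Your end-labelled auxiliary multigraph $\Gamma$, in which every vertex carries exactly $\lambda$ ends labelled $\a$ and $\lambda$ labelled $\b$, together with the two-stage transition system (first matching the $\min(\mu_L(x\a),\mu_L(x\b))$ balanced pendant ends at each $x$, then an arbitrary $\a$-to-$\b$ matching of the remaining ends), is a self-contained replacement: it treats both parities of $\lambda(v-1)$ uniformly, and it makes the partition of $E$ into pairs and the evenness of $|E|$ drop out of the open-trail decomposition, which is exactly the bookkeeping the citation leaves implicit. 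The underlying mechanism --- re-routing along structures alternating between attachment at $\a$ and at $\b$ --- is the same cycle-switching idea as in \cite{BHMS11,BryHor09}, so conceptually you have reconstructed their proof, but in a packaging that buys self-containedness at the cost of the paper's brevity. Two points you should make explicit in a final write-up: degenerate $\Gamma$-edges are loops (a $2$-cycle $(\a,x)$ in $\P$ yields a loop at $x$ with two $\a$-ends, and a subpath $[\a,x,\b]$ of a cycle through both yields a loop at $x$ with one end of each label, whose flip fixes the cycle, consistently with $\pi(P_i)$ having the same edge set as $P_i$); and the inference ``balance preserved at every vertex, hence still a packing'' deserves its one-line justification, namely that after a flip each vertex $x$ still demands exactly $\lambda$ edges to $\a$ and $\lambda$ to $\b$, which is precisely the multiplicity available in $\lambda K_v$, so the flipped cycles and the claimed leave $L'$ can be realised simultaneously on distinct parallel edges.
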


\begin{proof}
When $\lambda (v-1)$ is even, Lemma~\ref{Lemma:MultiCycleSwitch} reduces to \cite[Lemma 2.1]{BHMS11} so suppose $\lambda (v-1)$  is odd.
Note that $\P$ is a cycle packing of $\lambda K_v$ regardless of the parity of $\lambda(v-1)$, whereas when $\lambda (v-1)$  is odd \cite[Lemma 2.1]{BHMS11} concerns a cycle packing of $\lambda K_v-I$, where $I$ is a $1$-factor of $\lambda K_v$. Nevertheless, the proof of Lemma~\ref{Lemma:MultiCycleSwitch} follows from similar arguments to those used in the corresponding case of the  proof in \cite{BHMS11}.
\end{proof}

In applying Lemma~\ref{Lemma:MultiCycleSwitch} we say that we are performing the $(\a,\b)$-switch with origin $x$ and terminus $y$ (where $\{x_1,y_1,x_2,y_2\}\subseteq \{\a,\b,x,y\}$). Note that $x_1y_1$ and $x_2y_2$ may be parallel edges, in which case $x=y$.

\begin{lemma}\label{Lemma:LassoToCycle_lambda}\label{Lemma:LassoToCycle}
Let $v$, $s$ and $\lambda$ be positive integers such that $s\geq 3$, and let $M$ be a list of integers. Suppose there exists an $(M)$-packing $\P$ of $\lambda K_v$ whose leave contains a lasso of order at least $s+2$ and suppose that if $s$ is even then the cycle of the lasso has even length. Then there exists an $(M,s)$-packing of $\lambda K_v$.
\end{lemma}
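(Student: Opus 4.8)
The plan is to manufacture a single $s$-cycle inside (a suitably modified) leave of $\P$ and then move it into the packing: if the leave can be rearranged, without changing the length of any cycle of $\P$, so that it contains an $s$-cycle $C$, then $\P\cup\{C\}$ is the required $(M,s)$-packing. Thus the entire task reduces to producing an $s$-cycle among the edges of the leave, and the only permitted way to rearrange the leave while preserving the cycle lengths of $\P$ is the switching operation of Lemma~\ref{Lemma:MultiCycleSwitch}.

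Write the lasso as $(x_1,\ldots,x_p)[x_p,y_1,\ldots,y_q]$ with $p\geq 3$, $q\geq 1$ and $p+q\geq s+2$. It is clarifying to view it as the path $[y_q,\ldots,y_1,x_p,x_1,\ldots,x_{p-1}]$ together with the single extra edge $x_{p-1}x_p$ closing off the $p$-cycle; relabelling the path as $[w_0,\ldots,w_n]$ with $n=p+q-1$, a chord from the endpoint $w_n$ to $w_k$ carves out a cycle of length $n-k+1$, and the choice $k=p+q-s$ (which lies in the range $2\leq k\leq n-2$ exactly because $s\geq 3$ and $p+q\geq s+2$) would carve out an $s$-cycle. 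The aim is therefore to use switches to reposition the chord. I would treat the two regimes $p\geq s$ and $p<s$ separately. When $p\geq s$ I would aim to split the $p$-cycle by an $(\a,\b)$-switch with $\a=x_{s+1}$, $\b=x_1$ and origin $x_s$: this reroutes $x_sx_{s+1}$ to the chord $x_sx_1$, cutting off the $s$-cycle $(x_1,\ldots,x_s)$ and leaving a shorter cycle-with-tail behind (the small cases $p=s$ and $p=s+1$ being handled directly). When $p<s$ I would instead grow the cycle into the tail by an $(\a,\b)$-switch with $\a=y_{s-p+1}$, $\b=x_1$ and origin $y_{s-p}$, rerouting $y_{s-p}y_{s-p+1}$ to the chord $x_1y_{s-p}$ so that $(x_1,\ldots,x_p,y_1,\ldots,y_{s-p})$ becomes an $s$-cycle in the leave. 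In either regime the new $s$-cycle is then transferred into $\P$.

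The main obstacle is that Lemma~\ref{Lemma:MultiCycleSwitch} does not allow us to choose the outcome of a switch freely: with $\a$, $\b$ and the origin fixed, the lemma only supplies a valid switch for \emph{some} terminus $y^\ast$, determined by the partition it provides. The desired chord is created no matter what $y^\ast$ is (it arises from the origin's own reattachment), but the paired edge removed at the same time is incident with $\b=x_1$, and for the unlucky value $y^\ast=x_2$ this deletes the edge $x_1x_2$ that the intended $s$-cycle needs. Resolving these degenerate termini is the crux: in each such case I would re-examine the modified leave and either extract an $s$-cycle routed the other way around $x_1$ (through the now-freed edge $x_1x_p$) or iterate the argument on the new lasso the switch produces, all the while checking that the order of every intermediate lasso stays at least $s+2$ and that no switch disturbs a cycle of $\P$. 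This is also where I expect the parity hypothesis to be used: the requirement that the lasso's cycle be even when $s$ is even is precisely what should guarantee that, across all of these terminus cases, a cycle of length exactly $s$ (rather than $s\pm1$) can always be realised. I anticipate that essentially all of the case analysis, and hence the bulk of the difficulty, is concentrated in this controlled handling of the switch termini.
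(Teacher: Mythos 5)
Your overall framework is the same as the paper's --- repeatedly apply the switches of Lemma~\ref{Lemma:MultiCycleSwitch} to the lasso until the leave contains an $s$-cycle, then adjoin that cycle to $\P$ --- and you correctly identify the crux: the terminus of a switch cannot be chosen, so the ``unlucky'' terminus must be handled. But that handling is exactly what you defer (``I would re-examine\dots or iterate\dots''), and with your particular choice of switches the deferral cannot be filled in as sketched. Take your $p\geq s$ switch with $\alpha=x_{s+1}$, $\beta=x_1$ and origin $x_s$, and suppose the terminus is $x_2$: the new leave is $(L-\{x_sx_{s+1},x_1x_2\})+\{x_sx_1,x_2x_{s+1}\}$, whose affected component is the cycle $(x_1,x_s,x_{s-1},\ldots,x_2,x_{s+1},x_{s+2},\ldots,x_p)$ with the original tail still hanging at $x_p$ --- that is, a $(p,q)$-lasso with \emph{identical} parameters and no chord. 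Your fallback of rerouting an $s$-cycle ``the other way around $x_1$'' fails (there is no $s$-cycle in this leave), and ``iterating on the new lasso'' has no decreasing quantity, so the argument does not terminate. Your $p<s$ switch has the analogous defect: the bad terminus $x_2$ produces an $(s-p+2,\,2p+q-s-2)$-lasso, and a second bad terminus can return you to a lasso with cycle length $p$, so the iteration can oscillate. The missing idea is therefore not just bookkeeping: you must choose the switch pairs $(\alpha,\beta)$ so that even the unlucky terminus transforms the lasso \emph{monotonically} with respect to some measure.

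This is what the paper's proof arranges. For $p>s$ it performs the $(x_{p-s+1},y_1)$-switch with origin $x_{p-s+2}$ (one endpoint on the cycle, one on the tail): the lucky termini yield an $s$-cycle outright, while the unlucky terminus $x_p$ yields a $(p-s+2,\,q+s-2)$-lasso, so the cycle length strictly decreases by $s-2$ per round. For $p<s$ it uses the $(x_1,y_{q-1})$-switch with origin $x_2$ (and a companion switch), so that each unlucky round converts a $(p,q)$-lasso into a $(p+2,\,q-2)$-lasso, increasing the cycle length by exactly $2$; the parity hypothesis (cycle of even length when $s$ is even, plus a separate case when $s$ is odd and $p\not\equiv s\pmod 2$, which first trades the $(p,q)$-lasso for a $(q+2,p-2)$-lasso of the right parity) is what guarantees these $+2$ steps land on $p=s$ exactly, as you anticipated. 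Note also one technical point your write-up glosses over: before each switch one must verify the relevant multiplicity is zero (e.g.\ $\mu_L(x_sx_1)=0$ in your setup), which holds because otherwise the leave already contains an $s$-cycle; you implicitly use this but never state it. As it stands, your proposal is a correct strategy with the decisive step --- switch selection admitting a termination measure --- absent, and the specific switches you did commit to would stall.
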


\begin{proof}
Let $L$ be the leave of $\P$. Suppose that $L$ contains a $(p,q)$-lasso $(x_1, x_2,\ldots,x_p)[x_p, y_1, y_2,\ldots,y_q]$ such that $p+q\geq s+2$ and $p$ is even if $s$ is even.  
If $L$ contains an $s$-cycle then we add it to the packing to complete the proof, so assume $L$ does not contain an $s$-cycle and hence $p\neq s$.

\noindent {\bf Case 1.} Suppose $2\leq p<s$ and either $p=2$ or $p\equiv s\mod{2}$. We can assume that $p+q=s+2$ since $L$ contains a $(p,s+2-p)$-lasso. 

Let  $L'$ be the leave of the packing $\P'$ obtained from $\P$ by applying the $(x_1,y_{q-1})$-switch with origin $x_2$ (note that $\mu_L(x_2y_{q-1})=0$ for otherwise $L$ contains an $s$-cycle). 
If the terminus of the switch is not $y_{q-2}$ then $L'$ contains an $s$-cycle which completes the proof (recall that $s=p+q-2$). Otherwise, the terminus of the switch is $y_{q-2}$ and $L'$ contains a $(q,p)$-lasso $(x'_1, x'_2,\ldots,x'_q)[x'_q, y'_1, y'_2,\ldots,y'_p]$. If $p=2$ then $L'$ contains an $s$-cycle which completes the proof, so assume $L'$ contains no $s$-cycle and $p\geq 3$.

Let $L''$ be the leave of the packing $\P''$ obtained from $\P'$ by applying the $(x'_2,y'_p)$-switch with origin $x'_3$ (note that $\mu_{L'}(x'_3y'_{p})=0$ for otherwise $L'$ contains an $s$-cycle). If the terminus of this switch is not $y'_{p-1}$ then $L''$ contains an $s$-cycle which completes the proof (recall that $s=p+q-2$). Otherwise, the terminus of the switch is $y'_{p-1}$ and $L''$ contains a $(p+2,q-2)$-lasso, so since $p<s$ and $p\equiv s\mod{2}$, the result follows by repeating the procedure described in this case.

\noindent {\bf Case 2.} Suppose $3\leq p<s$ and $p\not\equiv s\mod{2}$. As above, assume $p+q=s+2$. Then $s$ is odd, $p\geq 4$ is even and $q$ is odd by our hypotheses. 

Let $L'$ be the leave of the packing $\P'$ obtained from $\P$ by applying the $(x_2,y_q)$-switch with origin $x_3$ (note that $\mu_{L}(x_3y_{q})=0$ for otherwise $L$ contains an $s$-cycle). If the terminus of the switch is not $y_{q-1}$ then $L'$ contains an $s$-cycle which completes the proof. 
Otherwise, the terminus of the switch is $y_{q-1}$ and $L'$ contains a $(q+2,p-2)$-lasso. Note that $q+2\leq s$ (because $p+q=s+2$ and $p\geq 4$) and $q+2\equiv s\mod{2}$. If $q+2=s$ then this completes the proof, otherwise we can proceed as in Case 1.

\noindent {\bf Case 3.} Suppose $3\leq s< p$. 
Let $L'$ be the leave of the packing $\P'$ obtained from $\P$ by applying the $(x_{p-s+1},y_1)$-switch with origin $x_{p-s+2}$ (note that $\mu_L(x_{p-s+2}y_1)=0$ for otherwise $L$ contains an $s$-cycle). 
If the terminus of the switch is not $x_p$ then $L'$ contains an $s$-cycle which completes the proof. Otherwise, $L'$ contains a $(p-s+2,q+s-2)$-lasso. By repeating this process we obtain an $(M)$-packing of $\lambda K_v$ whose leave contains a $(p',p+q-p')$-lasso such that $2\leq p'\leq s$ and $p'\equiv p\mod{(s+2)}$. If $p'=s$ then this completes the proof, otherwise we can proceed as in Case 1 or Case 2. 
\end{proof}

\begin{lemma}\label{Lemma:ChordToLasso_lambda}\label{Lemma:ChordToLasso}
Let  $v$, $s$ and $\lambda$ be positive integers with $s\geq 3$, and let $M$ be a list of integers. Suppose there exists an $(M)$-packing of $\lambda K_v$ whose leave $L$ has a component $H$ containing an $(s+1)$-cycle with a chord. Then there exists an $(M)$-packing of $\lambda K_v$ with a leave $L'$ such that $E(L')=(E(L)\setminus E(H))\cup E(H')$, where $H'$ is a graph with $V(H')=V(H)$ and $|E(H')|=|E(H)|$ which contains an $(s,1)$-lasso. Furthermore,  $\deg_{H'}(x)\geq \deg_H(x)$ for each vertex $x$ in the $s$-cycle of this lasso.
\end{lemma}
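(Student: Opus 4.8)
The plan is to use the cycle switching lemma (Lemma~\ref{Lemma:MultiCycleSwitch}) to slide the chord along the cycle until a lasso becomes visible. Write the $(s+1)$-cycle of $H$ as $C=(x_1,x_2,\ldots,x_{s+1})$ and let the chord join $x_1$ and $x_k$, where a chord parallel to a cycle edge corresponds to $k=2$. By relabelling I may take $x_1,x_2,\ldots,x_k$ to be a shorter of the two arcs of $C$ between the chord endpoints, so that $2\leq k\leq\floor{(s+3)/2}$, and I would induct on $k$.

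The base configuration is a proper chord cutting off a single vertex, i.e.\ $k=3$. Here no modification is needed: the $s$-cycle $(x_1,x_3,x_4,\ldots,x_{s+1})$, which uses the chord $x_1x_3$ and omits $x_2$, together with the pendant edge $x_1x_2$ is an $(s,1)$-lasso already contained in $H$ (the edge $x_2x_3$ being surplus), so I may take $H'=H$ and $L'=L$ and the degree condition holds with equality. A parallel chord ($k=2$) is disposed of first by a single switch with $\pi=(x_2\,x_3)$ that redirects one of the two copies of $x_1x_2$ to $x_1x_3$, producing exactly the $k=3$ configuration.

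For $k\geq4$ I would apply Lemma~\ref{Lemma:MultiCycleSwitch} with $\a=x_{k-1}$ and $\b=x_k$, taking the chord $x_1x_k$ as the origin (it is an excess edge at $x_k$ because $\mu_L(x_1x_k)>\mu_L(x_1x_{k-1})$; the case where $x_1x_{k-1}$ is already an edge is handled by using that shorter chord instead). The switch replaces $x_1x_k$ by $x_1x_{k-1}$ and a terminus edge by its image, and the excess set consists only of the chord, the two cycle edges $x_kx_{k+1}$ and $x_{k-2}x_{k-1}$, and edges joining $x_k$ or $x_{k-1}$ to vertices off $C$, so the partner of the chord is of one of these types. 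If the partner is $x_kx_{k+1}$ or $x_{k-2}x_{k-1}$, the switch immediately yields an $(s,1)$-lasso: in the first case $x_k$ is left incident only to $x_{k-1}x_k$ and the new edge $x_{k-1}x_{k+1}$ short-cuts $x_k$ to give an $s$-cycle with pendant $x_{k-1}x_k$; in the second case $x_{k-1}$ becomes the pendant tip in the analogous way. Otherwise the partner joins $x_k$ or $x_{k-1}$ to a vertex off $C$; then $C$ is left intact, the chord has been replaced by $x_1x_{k-1}$, and I would repeat the step with $k$ decreased by one, which terminates since $k$ reaches $3$ after finitely many steps.

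The step I expect to be the main obstacle is the sliding case, and specifically the bookkeeping for the inequality $\deg_{H'}(x)\geq\deg_H(x)$ on the final $s$-cycle. When the partner of the chord is an edge joining $x_{k-1}$ to a vertex off $C$ (a cross-side terminus) the switch preserves every degree, so these slides are harmless. But the parities of $\deg_L(x_{k-1})$ and $\deg_L(x_k)$ (both even, or both odd, according to whether $\lambda(v-1)$ is even or odd) force extra edges at the chord endpoints, and the partner may instead join $x_k$ to a vertex off $C$; such a same-side terminus still slides the chord but detaches two edges from $x_k$, which survives on the eventual $s$-cycle, threatening the degree bound. Controlling this is the delicate part: I would arrange the switches so that the only vertex ever stripped of edges is the one that ultimately becomes the pendant tip---for instance by choosing at each stage the switch orientation that carries surplus external edges along with the chord as degree-neutral moves, or by a global accounting of the net degree change at each cycle vertex. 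The existence of each switch is supplied by Lemma~\ref{Lemma:MultiCycleSwitch}, and confinement to the single component $H$ is automatic since every edge moved has both endpoints in $V(H)$; the real work is to show that the case analysis is exhaustive and that the chosen switches never decrease the degree of a vertex that remains on the $s$-cycle.
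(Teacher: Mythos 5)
There is a genuine gap, and it is exactly the one you flag yourself: the degree bookkeeping in the sliding step fails, and the fixes you sketch are not available. Your slide for $k\geq 4$ takes the chord $x_1x_k$ itself as the origin edge of the $(x_{k-1},x_k)$-switch. When the partner edge in the switch is also excess at $x_k$ (your ``same-side terminus''), the vertex $x_k$ loses two edges while the cycle $C$ survives intact, so $x_k$ generically remains on the eventual $s$-cycle with $\deg_{H'}(x_k)=\deg_H(x_k)-2$, contradicting the ``furthermore'' clause. Crucially, Lemma~\ref{Lemma:MultiCycleSwitch} gives you no control over which edge is paired with your origin: the lemma asserts the existence of \emph{some} partition of $E$ into switchable pairs, so ``choosing at each stage the switch orientation that carries surplus external edges along with the chord'' is not an operation the lemma supports, and no global accounting can rescue it since a single adverse pairing already kills the bound. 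Two smaller defects of the same kind: your list of possible partners is incomplete (an excess edge at $x_k$ or $x_{k-1}$ may go to another vertex \emph{on} $C$ --- a further chord --- since the no-$(s,1)$-lasso assumption only excludes the triangle chords $x_{j}x_{j+2}$, though these cases behave like your off-cycle ones), and in the $k=2$ case you cannot simply decree that ``one of the two copies of $x_1x_2$'' is redirected: both copies lie in $E$, they may be paired with each other or with a cycle edge such as $x_3x_4$, and each outcome needs its own (admittedly easy) verification.

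The paper's proof makes the one different choice that dissolves the problem: for the $(x_{e-1},x_e)$-switch it takes as origin the \emph{cycle edge} $x_{e-2}x_{e-1}$ (legitimate since $\mu_L(x_{e-2}x_e)=0$, else an $(s,1)$-lasso is already present), rather than the chord. Then the dichotomy on the terminus is clean: if the terminus is $x_{e+1}$, the partner is $x_ex_{e+1}$ and the switch is degree-neutral, merely transposing $x_{e-1}$ and $x_e$ on the cycle so that the chord index drops from $e$ to $e-1$; for any other terminus, the new edge $x_{e-2}x_e$ closes the $s$-cycle $(x_{e+1},\ldots,x_{s+1},x_1,\ldots,x_{e-2},x_e)$ with pendant edge $x_{e-1}x_e$ (untouched, as an edge between the two switched vertices), and the \emph{only} vertex whose degree can decrease is $x_{e-1}$ --- which is precisely the pendant tip, off the $s$-cycle. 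So the degree guarantee holds automatically in every branch, with no need to steer the switch. If you replace your origin edge by $x_{e-2}x_{e-1}$ and redo the (now two-branch) case analysis, your induction on the chord index goes through essentially as in the paper.
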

 
  \begin{proof}
Let $(x_1,\ldots,x_{s+1})$ be an $(s+1)$-cycle in $H$ with chord $x_1x_e$ for some $e\in\{2,3,\ldots,s-1\}$ (note that $L$ is not necessarily a simple graph). If $H$ contains an $(s,1)$-lasso then we are finished immediately, so suppose otherwise. 
If ${e=2}$, then perform the $(x_3,x_2)$-switch with origin $x_4$ (note that $\mu_L(x_2x_4)=0$ because $H$ contains no $(s,1)$-lasso).  
The leave of the resulting packing contains the $(s,1)$-lasso $(x_4,\ldots,x_{s+1},x_1,x_2)[x_2,x_3]$, and $\deg_{H'}(x_i)\geq\deg_H(x_i)$ for $i\in \{1,\ldots,s+1\}\setminus\{3\}$. If $e=3$, then $H$ contains an $(s,1)$-lasso which completes the proof. 

So suppose $e\geq 4$ and let $\P^*$ be the packing with leave $L^*$ obtained from $\P$ by applying the $(x_{e-1},x_e)$-switch with origin $x_{e-2}$ (note that $\mu_L(x_{e-2}x_e)=0$ for otherwise $L$ contains an $(s,1)$-lasso). If the terminus of the switch is not $x_{e+1}$ then $E(L^*)=(E(L)\setminus E(H))\cup E(H^*)$, where $H^*$ is a graph with $V(H^*)=V(H)$ and $|E(H^*)|=|E(H)|$ which contains the {$(s,1)$-lasso} $(x_{e+1},\ldots,x_{s+1},x_1,\ldots,x_{e-2},x_{e})[x_e,x_{e-1}]$. Also note that 
$\deg_{H^*}(x_e)\geq \deg_H(x_e)$ and $\deg_{H^*}(x_i)=\deg_H(x_i)$ for $i\in \{1,\ldots,s+1\}\setminus\{e,e-1\}$. 
Otherwise, the terminus of the switch is $x_{e+1}$ and $E(L^*)=(E(L)\setminus E(H))\cup E(H^*)$, where $H^*$ is a graph with $V(H^*)=V(H)$ and $|E(H^*)|=|E(H)|$ which contains an $(s+1)$-cycle $(x^*_1,\ldots,x^*_{s+1})$ with chord $x^*_1x^*_{e-1}$. Furthermore, the degree of each vertex in this $(s+1)$-cycle remains unchanged in $H'$.
The result follows by repeating this process.
  \end{proof}

\section{Main result}\label{Section:proof}

This section contains the proof of Theorem~\ref{Theorem:LambdaPackings}. 
We first use Theorem~\ref{Theorem:MultigraphDecomp} (stated below) to prove Lemma~\ref{Lemma:lambda_necessaryconditions} which shows the necessity of the conditions in Theorem~\ref{Theorem:LambdaPackings}.
The sufficiency of these conditions is then established for $\lambda$ odd and $\lambda$ even in Lemmas~\ref{Lemma:lambda_odd} and \ref{Lemma:lambda_even} respectively. 
Lemmas~\ref{Lemma:lambda_odd} and \ref{Lemma:lambda_even} rely on using Lemmas~\ref{Lemma:LassoToCycle_lambda} and \ref{Lemma:ChordToLasso_lambda} to modify cycle packings of $\lambda K_v$ obtained via Theorem~\ref{Theorem:MultigraphDecomp}.

\begin{theorem}[{\cite{BHMS15}}]\label{Theorem:MultigraphDecomp} Let $m_1,m_2,\ldots,m_\tau$ be a nondecreasing list  of integers and let $\lambda$ and $v$ be positive integers.
There is an $(m_1,m_2,\ldots,m_\tau)$-decomposition of $\lambda K_v$ if and only if
\begin{itemize}
\item $\lambda(v-1)$ is even; 
\item $2\leq m_1\leq m_2,\ldots,m_\tau\leq v$;
\item $m_1+m_2+\cdots+m_\tau=\lambda\binom{v}{2}$;
\item $m_\tau+\tau -2\leq \frac{\lambda}{2}\binom{v}{2}$ when $\lambda$ is even; and
\item $\sum_{m_i=2}m_i\leq (\lambda-1)\binom{v}{2}$ when $\lambda$ is odd.
\end{itemize}
There is an $(m_1,m_2,\ldots,m_\tau)$-decomposition of $\lambda K_v-I$, where $I$ is a $1$-factor in $\lambda K_v$, if and only if
\begin{itemize}
\item $\lambda(v-1)$ is odd;
\item $2\leq m_1\leq m_2,\ldots,m_\tau\leq v$;
\item $m_1+m_2+\cdots+m_\tau=\lambda\binom{v}{2}-\frac{v}{2}$; and
\item $\sum_{m_i=2}m_i\leq (\lambda-1)\binom{v}{2}$.
\end{itemize}
\end{theorem}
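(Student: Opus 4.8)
The plan is to establish necessity by counting and sufficiency by transforming known decompositions, handling the two equivalences together: the only structural difference is that when $\lambda(v-1)$ is odd one first removes a fixed $1$-factor $I$ (which exists because $\lambda(v-1)$ odd forces $v$ even), after which $\lambda K_v - I$ is even and the even-case argument applies.

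For necessity, each $m_i$-cycle is an even submultigraph on $m_i$ distinct vertices (a $2$-cycle being a pair of parallel edges), so $2\leq m_i\leq v$ and the decomposed multigraph must have all degrees even; since every vertex of $\lambda K_v$ has degree $\lambda(v-1)$, this forces $\lambda(v-1)$ even for a pure cycle decomposition and forces the removal of $I$ otherwise, and comparing edge counts gives the two sum conditions. The bound for $\lambda$ odd is immediate once one observes that among the $\lambda$ parallel edges on any pair at most $\floor{\lambda/2}=\tfrac{\lambda-1}{2}$ disjoint $2$-cycles can be formed; summing over the $\binom{v}{2}$ pairs yields $\sum_{m_i=2}m_i\leq(\lambda-1)\binom{v}{2}$. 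The bound $m_\tau+\tau-2\leq\tfrac{\lambda}{2}\binom{v}{2}$ for $\lambda$ even is the only genuinely extremal condition, being sharper than the trivial inequality $m_\tau+2(\tau-1)\leq\lambda\binom{v}{2}$ that follows from $m_i\geq 2$; I would derive it from the standard obstruction to combining one near-spanning cycle with many short cycles under even edge-multiplicities.

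For sufficiency I would proceed by induction, reducing an arbitrary target list to a base decomposition whose existence is classical. The base inputs are the uniform-length cycle decompositions of $\lambda K_v$ and $\lambda K_v-I$ (the multigraph form of the Alspach problem) together with the Hamilton (or near-Hamilton) decompositions from Walecki-type constructions, which supply the long cycles directly. Given a valid target $m_1\leq\cdots\leq m_\tau$, I would then transform such a base decomposition into one with exactly the prescribed lengths by a sequence of local operations that transfer edges between cycles while preserving the total edge count: the cycle switching of Lemma~\ref{Lemma:MultiCycleSwitch} merges or splits cycles, while the packaged operations of Lemmas~\ref{Lemma:ChordToLasso_lambda} and \ref{Lemma:LassoToCycle_lambda} split a cycle of a prescribed shorter length off a chorded cycle or a lasso in the leave. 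Carrying this out inductively, peeling off the required lengths one at a time and maintaining the conditions for the residual list, drives the current length multiset to the target.

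The main obstacle, as always for such characterisations, is the extremal regime. At each step one must guarantee that the current configuration contains the structure required to apply the next operation, namely a lasso of order at least $m_i+2$ of the correct parity for Lemma~\ref{Lemma:LassoToCycle_lambda} or a chorded $(m_i+1)$-cycle for Lemma~\ref{Lemma:ChordToLasso_lambda}; since a lasso has at most $v$ vertices, the longest cycles cannot be produced this way and must instead be inherited from the base decomposition, so the order in which lengths are realised matters. The cases of equality in the inequality conditions, namely $\lambda$ even with $m_\tau+\tau-2=\tfrac{\lambda}{2}\binom{v}{2}$ and $\lambda$ odd with the maximal number of $2$-cycles, are precisely where the generic transfer operations get stuck; there the decomposition is essentially rigid and must be built by an explicit construction, and verifying that the stated inequalities are exactly the threshold separating the generic from the rigid cases is the crux of the proof.
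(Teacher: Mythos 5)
This statement is not proved in the paper at all: it is Theorem~\ref{Theorem:MultigraphDecomp}, imported verbatim from \cite{BHMS15} and used as a black box throughout Section~\ref{Section:proof}. So there is no internal proof to compare against, and your attempt is in effect a sketch of the main theorem of \cite{BHMS15} itself, whose actual proof occupies a substantial separate paper and rests on the full resolution of the $\lambda=1$ case in \cite{BryHorPet14} --- not on uniform-length (Alspach-type) base decompositions plus local switching, which is roughly what you propose.

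Beyond the mismatch of scope, the sketch has concrete gaps. First, the necessity of $m_\tau+\tau-2\leq\frac{\lambda}{2}\binom{v}{2}$ for $\lambda$ even is only asserted (``I would derive it from the standard obstruction''); it does not follow from the trivial count $m_\tau+2(\tau-1)\leq\lambda\binom{v}{2}$, and the required parity argument --- each pair of vertices must receive even total multiplicity, which constrains how the remaining $\tau-1$ cycles can cover the pairs used by the longest cycle --- is never given. Second, your sufficiency mechanism misuses the paper's tools: Lemmas~\ref{Lemma:MultiCycleSwitch}, \ref{Lemma:LassoToCycle_lambda} and \ref{Lemma:ChordToLasso_lambda} operate on the \emph{leave} of a packing (and in a decomposition the leave is empty); they split prescribed-length cycles off structures in the leave but cannot ``transfer edges between cycles'' of an existing decomposition as your induction requires, and, as you yourself note, they cannot produce cycles of length $v$ or $v-1$ at all. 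Finally, you concede that the extremal regimes --- exactly the cases of equality in the stated inequalities --- are where the scheme ``gets stuck'' and ``must be built by an explicit construction'' that you do not supply; that concession is precisely where the difficulty of \cite{BHMS15} lives, so what you have is a plausible roadmap, not a proof.
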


The necessity of conditions Theorem~\ref{Theorem:LambdaPackings}(i)--(iv) follows from Theorem \ref{Theorem:MultigraphDecomp}  as we now show.

\begin{lemma}\label{Lemma:lambda_necessaryconditions}
Let $m_1,m_2,\ldots,m_\tau$ be a nondecreasing list of integers  and let $\lambda$ and $v$ be positive integers.
If there exists an $(m_1,m_2,\ldots,m_\tau)$-packing of $\lambda K_v$ then
\begin{itemize}
\item[(i)] $2\leq m_1\leq m_2,\ldots,m_\tau\leq v$;

\item[(ii)] $m_1+m_2+\cdots+m_\tau=\lambda\binom{v}{2}-\delta$, where $\delta$ is a nonnegative integer such that $\delta\neq 1$ when $\lambda (v-1)$ is even, $\delta\neq 2$ when $\lambda=1$,  and $\delta\geq \frac{v}{2}$ when $\lambda (v-1)$ is odd; 

\item[(iii)] $\sum_{m_i=2}m_i\leq \left\{\begin{array}{l l}
(\lambda-1)\binom{v}{2}-2& \text{if $\lambda$ and $v$ are odd and $\delta=2$,}\\[1mm]
(\lambda-1)\binom{v}{2}& \text{if $\lambda$ is odd; and}
\end{array}
\right.$ 

\item[(iv)] $m_\tau\leq \left\{\begin{array}{ll}
\frac{\lambda}{2}\binom{v}{2}-\tau+2 & \text{if $\lambda$ is even and $\delta=0$},\\[1mm]
\frac{\lambda}{2}\binom{v}{2}-\tau+1 & \text{if $\lambda$ is even and $2\leq \delta<m_\tau$.}\\
\end{array}\right.$
\end{itemize}
\end{lemma}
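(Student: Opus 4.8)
The plan is to derive each of conditions (i)–(iv) from the decomposition characterisation in Theorem~\ref{Theorem:MultigraphDecomp}. The central idea is that any $(m_1,\dots,m_\tau)$-packing $\P$ of $\lambda K_v$ has a leave $L$ with exactly $\delta := \lambda\binom{v}{2} - \sum_i m_i$ edges, and this leave is an \emph{even} multigraph: since every vertex of $\lambda K_v$ has degree $\lambda(v-1)$ and each cycle removed contributes an even degree at every vertex, the parity of $\deg_L(x)$ equals the parity of $\lambda(v-1)$ for every $x$. Thus when $\lambda(v-1)$ is even, $L$ is an even graph, and when $\lambda(v-1)$ is odd, $L$ is an odd graph. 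I would state condition (i) as immediate (each cycle length lies between $2$ and $v$ because a cycle in $\lambda K_v$ uses at most $v$ distinct vertices), and set up the edge-count identity defining $\delta$ at the outset.

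First I would prove (ii). When $\lambda(v-1)$ is even, $L$ is an even multigraph, so it has no vertex of degree $1$; hence $L$ cannot consist of a single edge, ruling out $\delta = 1$. When $\lambda = 1$ (so $L$ is a simple graph being a subgraph of $K_v$), a leave with exactly two edges would have to be two independent edges or a path of length $2$, neither of which is an even graph, so $\delta \neq 2$. When $\lambda(v-1)$ is odd, $L$ is an odd graph on some subset of the $v$ vertices; every vertex of $\lambda K_v$ has odd degree, so in fact \emph{every} vertex must retain odd degree in $L$, forcing $L$ to have a vertex of positive odd degree at each of the $v$ vertices, and a minimal such configuration is a $1$-factor with $\tfrac{v}{2}$ edges, giving $\delta \geq \tfrac{v}{2}$. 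The cleanest route here is to invoke Theorem~\ref{Theorem:MultigraphDecomp} directly: the decomposition of $\lambda K_v - I$ when $\lambda(v-1)$ is odd shows the leave must absorb at least a $1$-factor.

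For (iii) and (iv) I would argue by contradiction using the decomposition theorem. Suppose an $(m_1,\dots,m_\tau)$-packing exists; I want to exhibit a corresponding decomposition that Theorem~\ref{Theorem:MultigraphDecomp} forbids if the packing bound fails. The key manoeuvre is that the leave $L$ of an even multigraph decomposes into cycles (being even, $L$ is a union of edge-disjoint cycles), so the packing together with a cycle decomposition of $L$ yields a genuine cycle decomposition of $\lambda K_v$; applying the decomposition bounds of Theorem~\ref{Theorem:MultigraphDecomp} to this enlarged list then yields (iii) and the $\delta = 0$ case of (iv). For the refined bounds — the ``$-2$'' in (iii) when $\lambda, v$ odd and $\delta = 2$, and the ``$2 \leq \delta < m_\tau$'' case of (iv) — I expect the main obstacle: here the leave is small and need not decompose into cycles of admissible lengths, so I would instead analyse the structure of the leave directly. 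The plan is to note that when $\delta < m_\tau$ the longest cycle $m_\tau$ cannot be ``hidden'' in the leave, so adjoining the leave's edges to the packing and recounting forces the stronger inequality; carefully tracking how the extra $\delta$ edges interact with the longest-cycle constraint $m_\tau + \tau - 2 \leq \tfrac{\lambda}{2}\binom{v}{2}$ from the even-$\lambda$ decomposition theorem is the delicate step, as the bound shifts by exactly one or two depending on whether those residual edges can be amalgamated into the cycle structure.
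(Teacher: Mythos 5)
Your handling of (i), (ii) and (iv) tracks the paper's actual proof closely. For (ii) the paper uses exactly your parity argument (the leave has every degree congruent to $\lambda(v-1)$, and an even graph cannot have one edge, an even simple graph cannot have two, an odd graph on $v$ vertices has at least $\tfrac{v}{2}$ edges); your side remark that one could instead ``invoke Theorem~\ref{Theorem:MultigraphDecomp} directly'' for the odd case is not actually a valid derivation (that theorem characterises decompositions of $\lambda K_v - I$ and says nothing by itself about leaves of packings), but it is harmless since your direct argument stands on its own. For (iv), your outline is the paper's argument: when $\delta=0$ the packing is a decomposition, and when $2\leq\delta<m_\tau$ the leave is even (as $\lambda$ is even), so it decomposes into at least one cycle, each of length at most $\delta<m_\tau$; hence there is an $(m_1,\ldots,m_\tau,M)$-decomposition with at least $\tau+1$ cycles and unchanged maximum entry, and the decomposition bound $m_\tau+\tau'-2\leq\frac{\lambda}{2}\binom{v}{2}$ gives the shift by exactly one. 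You should pin down your vague ``shifts by exactly one or two'' remark to precisely this.

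The genuine gap is in (iii). Your only stated mechanism is to complete the packing to a decomposition by cutting the leave into cycles, which requires the leave to be an \emph{even} multigraph. But (iii) is asserted for all odd $\lambda$, including $v$ even, where $\lambda(v-1)$ is odd: there the leave is an odd multigraph, it does not decompose into cycles, and you cannot in general extract a $1$-factor $I\subseteq L$ with $L-I$ even, since an odd multigraph need not contain a perfect matching (a component with degrees $3,1,1,1$, as in $K_{1,3}$, already defeats this). So neither half of Theorem~\ref{Theorem:MultigraphDecomp} can be invoked, and your route for (iii) fails in this case. The paper avoids the issue entirely with a one-line count that is independent of all parities: each pair of vertices carries $\lambda$ parallel edges and hence supports at most $\lfloor\lambda/2\rfloor$ edge-disjoint $2$-cycles, so any packing contains at most $\lfloor\lambda/2\rfloor\binom{v}{2}=\frac{\lambda-1}{2}\binom{v}{2}$ two-cycles when $\lambda$ is odd, which is exactly $\sum_{m_i=2}m_i\leq(\lambda-1)\binom{v}{2}$. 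For the refined case ($\lambda$, $v$ odd, $\delta=2$) your instinct to ``analyse the leave directly'' is correct, but the missing observation is concrete and simple: an even multigraph with exactly two edges must be a $2$-cycle, so the leave is itself a $2$-cycle edge-disjoint from all $2$-cycles of the packing, improving the count by $2$. Replace your completion-to-decomposition argument for (iii) with this counting argument and the proof goes through.
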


\begin{proof} Suppose there exists an $(m_1,m_2,\ldots,m_\tau)$-packing $\P$ of $\lambda K_v$ with leave $L$. 
Condition (i) is obvious.
The degree of each vertex in $\lambda K_v$ is $\lambda (v-1)$, so if $\lambda (v-1)$ is even then $L$ is an even multigraph and if $\lambda (v-1)$ is odd then $L$ is an odd multigraph. Hence (ii) follows because an even graph cannot have a single edge, an even simple graph cannot have two edges, and an odd graph on $v$ vertices has at least $\frac{v}{2}$ edges. 
To see that condition (iii) holds, note that there are at most $\floor{\frac{\lambda}{2}}\binom{v}{2}$ edge-disjoint $2$-cycles in $\lambda K_v$. Furthermore, note that if $\lambda$ and $v$ are both odd and $\delta=2$ then $L$ is a $2$-cycle (because $L$ is an even multigraph and has two edges). 
If $\lambda$ is even and $\delta=0$ then (iv) follows directly from Theorem~\ref{Theorem:MultigraphDecomp}, so suppose $\lambda$ is even and $2\leq \delta<m_\tau$. Then $L$ contains at least one cycle so there exists an $(m_1,m_2,\ldots,m_\tau,M)$-decomposition of $\lambda K_v$ for some list $M$ containing at least one entry. So (iv) follows from Theorem~\ref{Theorem:MultigraphDecomp}. 
\end{proof}

It remains to prove the sufficiency of Theorem~\ref{Theorem:LambdaPackings}(i)--(iv) for the existence of cycle packings of $\lambda K_v$.

\begin{lemma}\label{Lemma:lambda_odd}
Let $m_1,m_2,\ldots,m_\tau$ be a nondecreasing list  of integers and let $\lambda$ and $v$ be positive integers with $\lambda$ odd. Then there exists an $(m_1,m_2,\ldots,m_\tau)$-packing of $\lambda K_v$ if and only if 
\begin{itemize}
\item[(i)] $2\leq m_1\leq m_2,\ldots,m_\tau\leq v$;

\item[(ii)] $m_1+m_2+\cdots+m_\tau=\lambda\binom{v}{2}-\delta$, where $\delta$ is a nonnegative integer such that $\delta\neq 1$, $(\lambda,\delta)\neq (1,2)$, and if $v$ is even then $\delta\geq \frac{v}{2}$; and

\item[(iii)] $\sum_{m_i=2}m_i\leq \left\{\begin{array}{l l}
(\lambda-1)\binom{v}{2}-2& \text{if $v$ is odd and $\delta=2$,}\\[1mm]
(\lambda-1)\binom{v}{2}& \text{otherwise.}
\end{array}
\right.$ 
\end{itemize}
\end{lemma}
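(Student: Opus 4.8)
The necessity of (i)--(iii) is the special case $\lambda$ odd of Lemma~\ref{Lemma:lambda_necessaryconditions} (when $\lambda$ is odd, condition (iv) is vacuous and $\lambda(v-1)$ is odd exactly when $v$ is even), so the plan is to prove sufficiency. Throughout write $\delta:=\lambda\binom{v}{2}-\sum_i m_i$, and split on the parity of $v$, since the base decomposition supplied by Theorem~\ref{Theorem:MultigraphDecomp} is one of $\lambda K_v$ (when $v$ is odd) or of $\lambda K_v-I$ for a $1$-factor $I$ (when $v$ is even).

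First, the generic cases, obtained by retaining part of a decomposition as the leave. When $v$ is odd I would realise the intended leave as a union of cycles of lengths $n_1,\dots,n_k\in[2,v]$ with $\sum_j n_j=\delta$, chosen to use as few $2$-cycles as possible, and apply the first half of Theorem~\ref{Theorem:MultigraphDecomp} to the combined list $(m_1,\dots,m_\tau,n_1,\dots,n_k)$; the $m_i$-cycles are the packing and the $n_j$-cycles the leave. When $v$ is even I would instead write the leave as $I$ together with cycles of total length $\delta-\tfrac{v}{2}$ and apply the second half of Theorem~\ref{Theorem:MultigraphDecomp} to $\lambda K_v-I$. This construction goes through whenever $\delta$, respectively $\delta-\tfrac{v}{2}$, is either $0$ or at least $3$, so that its cycle lengths can be chosen in $[3,v]$. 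In every such case all hypotheses of Theorem~\ref{Theorem:MultigraphDecomp} are immediate except the $2$-cycle bound; verifying that from (iii) is the only real work, and I expect to use $\sum_i m_i=\lambda\binom{v}{2}-\delta$ together with a short parity/counting argument to control the $2$-cycles forced into the leave when $v$ is small (notably $v=3$) and when $\delta=2$.

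The genuinely hard cases are $v$ even with $\delta\in\{\tfrac{v}{2}+1,\tfrac{v}{2}+2\}$: then the leave is an odd graph whose ``even part'' would have to be $1$ or $2$ edges and so cannot be presented as a $1$-factor plus cycles; these are exactly the configurations flagged in the introduction. Here I would extract the longest cycle last. Using the generic construction I would first build an $(m_1,\dots,m_{\tau-1})$-packing whose leave is a $1$-factor $I$ together with a single cycle $C$ of length $m_\tau+1$ or $m_\tau+2$ (so that deleting an $m_\tau$-cycle from the leave leaves exactly $\delta$ edges), with some edge of $I$ incident to $C$. If $C$ together with an incident $I$-edge forms a lasso of order at least $m_\tau+2$ whose cycle has the parity demanded by Lemma~\ref{Lemma:LassoToCycle_lambda}, that lemma produces the required $m_\tau$-cycle and hence the desired packing. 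When parity forces the cycle of this lasso to be wrong (the case $\delta=\tfrac v2+1$ with $m_\tau$ even, where $C$ has odd length $m_\tau+1$), I would instead arrange an $I$-edge as a chord of $C$ and apply Lemma~\ref{Lemma:ChordToLasso_lambda} with $s=m_\tau$ to replace the resulting $(m_\tau+1)$-cycle-with-chord by an $(m_\tau,1)$-lasso, whose $m_\tau$-cycle may then be added to the packing directly.

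I expect the main obstacle to lie in this last family. The difficulties are: matching the parity of the cycle of the lasso to the parity of $m_\tau$; guaranteeing a suitable incident $1$-factor edge or chord; the boundary lengths $m_\tau\in\{v-1,v\}$, where a cycle of length $m_\tau+1$ or $m_\tau+2$ no longer fits on $v$ vertices and $C$ must be split into two cycles; and the $\lambda=1$ subcase, where there are no parallel edges to serve as chords, so every chord used in Lemma~\ref{Lemma:ChordToLasso_lambda} must be a genuine edge of $I$ or of $C$. These boundary situations will likely require several ad hoc leave configurations rather than a single uniform construction, and organising them so that Lemmas~\ref{Lemma:LassoToCycle_lambda} and \ref{Lemma:ChordToLasso_lambda} always apply is where I anticipate the bulk of the effort.
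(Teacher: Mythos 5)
Your overall plan coincides with the paper's: necessity via Lemma~\ref{Lemma:lambda_necessaryconditions}, the generic cases by appending to the list a collection of leave-cycle lengths summing to $\varepsilon$ (where $\varepsilon=\delta$ for $v$ odd and $\varepsilon=\delta-\tfrac{v}{2}$ for $v$ even) and invoking Theorem~\ref{Theorem:MultigraphDecomp}, then the exceptional cases $v$ even, $\varepsilon\in\{1,2\}$ via Lemmas~\ref{Lemma:LassoToCycle_lambda} and~\ref{Lemma:ChordToLasso_lambda}. One small correction to the generic part: $\varepsilon\geq 3$ does not let you keep all leave-cycle lengths in $[3,v]$ when $v=3$, nor when $v=4$ and $\varepsilon\equiv 1\pmod 4$; in exactly these situations the paper admits a forced $2$-cycle in the leave and verifies the $2$-cycle bound of Theorem~\ref{Theorem:MultigraphDecomp} from (ii), much as you anticipate.

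The genuine gap is in the exceptional case, and it stems from your decision to extract the longest cycle last. Take $\varepsilon=1$ with $m_\tau$ even: condition (ii) forces the list to contain an odd entry, but that entry need not be $m_\tau$, so this subcase is nonvacuous. Removing an $(m_\tau+1)$-cycle $C$ from a decomposition of $\lambda K_v-I$ leaves $C\cup I$, and since $|V(C)|=m_\tau+1$ is odd, at least one $I$-edge is pendant on $C$; Lemma~\ref{Lemma:LassoToCycle_lambda} with $s=m_\tau$ is then unusable, because $s$ is even while the cycle of every available lasso has odd length $m_\tau+1$. Your fallback --- ``arrange an $I$-edge as a chord of $C$'' --- cannot be arranged: Theorem~\ref{Theorem:MultigraphDecomp} is a pure existence statement with no control over how $I$ sits relative to the cycles, and whenever $m_\tau+1\leq \tfrac{v}{2}$ the $1$-factor may match every vertex of $C$ to a vertex off $C$, so the component of the leave containing $C$ has no chord at all and Lemma~\ref{Lemma:ChordToLasso_lambda} has nothing to act on. The paper sidesteps this dead end by extracting not $m_\tau$ but $m$, the least odd entry of the list (such an entry exists whenever $\varepsilon=1$), taking $m$ to be the least entry $\geq 4$ only when the list has no odd entries, in which case (ii) forces $\varepsilon=2$ and $m+\varepsilon$ is even. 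With that choice the parity hypothesis of Lemma~\ref{Lemma:LassoToCycle_lambda} is met whenever a lasso is present, and when no lasso is present the component containing the extracted cycle is an even cycle plus a $1$-factor on its own vertex set, so chords exist automatically and one or two applications of Lemma~\ref{Lemma:ChordToLasso_lambda} finish. Your boundary subcase $m_\tau\in\{v-1,v\}$ is likewise real, but with the paper's choice of $m$ it only arises as $m+\varepsilon>v$ with $\varepsilon=2$ and $m\in\{v-1,v\}$, and is settled there concretely (for $m=v-1$, via an $(M\setminus((v-1)^2),v^2)$-decomposition of $\lambda K_v-I$ and two rounds of Lemma~\ref{Lemma:ChordToLasso_lambda}); once you adopt the paper's choice of extracted cycle length, the rest of your argument goes through essentially as the paper's Case~2a.
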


\begin{proof} If there exists an $(m_1,m_2,\ldots,m_\tau)$-packing $\P$ of $\lambda K_v$, then conditions (i)--(iii) hold by Lemma~\ref{Lemma:lambda_necessaryconditions}. 
So it remains to show that if $\lambda$, $v$ and $m_1,m_2,\ldots,m_\tau$ satisfy (i)--(iii), then there is an $(m_1,m_2,\ldots,m_\tau)$-packing of $\lambda K_v$.

 Let $\E=\delta$ if $v$ is odd, and $\E=\delta-\frac{v}{2}$ if $v$ is even. If $\E=0$ then the result follows by Theorem~\ref{Theorem:MultigraphDecomp}. 
 If $v=2$, then $\E$ is even by (i) and (ii) and there exists a $2$-cycle decomposition of $\lambda K_2-I$, where $I$ is a $1$-factor of $\lambda K_2$, so the result follows. 
 So suppose $\E\geq 1$ and $v\geq 3$, and note that if $v$ is odd then $\E\neq 1$  and $(\lambda,\E)\neq (1,2)$.

\noindent {\bf Case 1.} Suppose $v$ is odd or $\E\geq 3$. Note that if $v$ is odd and $\E=2$ then $2+\sum_{m_i=2}m_i\leq (\lambda-1)\binom{v}{2}$ by (iii). 

We show that there exists a list 
$N$ such that $2\leq n\leq v$ for all $n\in N$, $\sum N=\E$ and $\sum_{n\in N, n=2}n + \sum_{m_j=2}m_j\leq (\lambda-1)\binom{v}{2}$. If this list exists, then by Theorem~\ref{Theorem:MultigraphDecomp} there exists an $(m_1,m_2,\ldots,m_\tau, N)$-decomposition $\D$ of $\lambda K_v$ (if $v$ is odd) or $\lambda K_v-I$ (if $v$ is even), where $I$ is a $1$-factor of $\lambda K_v$. We obtain the required packing by removing cycles of lengths $N$ from $\D$. 

We first consider $v=3$. 
If $v=3$ and $\E$ is even, then $m_i=3$ for some $i\in\{1,\ldots,\tau\}$ by (i) and (ii). Then $\E+\sum_{m_i=2}m_i\leq (\lambda-1)\binom{v}{2}$ by (ii) and we take $N=(2^{\E/2})$.
If $v=3$ and $\E$ is odd then $\E-3+\sum_{m_i=2}m_i\leq (\lambda-1)\binom{v}{2}$ by (ii) and we take $N=(2^{(\E-3)/2},3)$. 
In each of these cases we can see that there exists an $(m_1,m_2,\ldots,m_\tau,N)$-decomposition of $\lambda K_v$ since the assumptions of Theorem~\ref{Theorem:MultigraphDecomp} are satisfied by (i)--(iii).

Now assume $v\geq 4$ and let $q$ and $r$ be nonnegative integers such that $\E=vq+r$ and $0\leq r<v$. If $q=0$ or $r\not\in\{1,2\}$ then we take $N=(r,v^{q})$. 
If $q\geq 1$ and $r\in\{1,2\}$ then $N=(3,v-3+r,v^{q-1})$ (note that either $v-3+r\geq 3$, or $v=4$ and $r=1$). 
If $\E=2$ or $(v,r)=(4,1)$, then $N$ contains exactly one entry equal to $2$ and otherwise $n\geq 3$ for all $n\in N$. By (iii) and the assumptions of this case, if $\E=2$ then  $2+\sum_{m_i=2}m_i\leq (\lambda-1)\binom{v}{2}$. Further, if $v=4$ and $\E=4q+1$ for some $q\geq 1$ then (i) and (ii) imply that $m_i=3$ for some $i\in \{1,\ldots,\tau\}$ so again $2+\sum_{m_i=2}m_i\leq (\lambda-1)\binom{v}{2}$.
We can therefore see that there exists an $(m_1,m_2,\ldots,m_\tau,N)$-decomposition of $\lambda K_v$ (or $\lambda K_v-I$) since the assumptions of Theorem~\ref{Theorem:MultigraphDecomp} are satisfied by (i)--(iii) and the fact that $\sum N=\E$.

\noindent {\bf Case 2.} Suppose $v$ is even and $\E\in\{1,2\}$. 
Let $M=m_1,m_2,\ldots,m_\tau$ and let $m$ be the least odd entry in $M$ if $M$ contains an odd entry, otherwise let $m$ be the least entry in $M$ such that $m\geq 4$ (such an entry exists by (iii)).
Note that  $v\geq 4$ and if $\E=1$ then it follows from (ii) that $M$ contains an odd entry and $m$ is odd. 

\noindent {\bf Case 2a.} Suppose $m+\E\leq v$.
By Theorem~\ref{Theorem:MultigraphDecomp} there exists an $(M\setminus (m), m+\E)$-decomposition $\D$ of $\lambda K_v-I$, where $I$ is a $1$-factor of $\lambda K_v$. Let $\P$ be the $(M\setminus (m))$-packing of $\lambda K_v$ that is obtained by removing an $(m+\E)$-cycle from $\D$.  
Let $L$ be the leave of $\P$ and note that $L$ consists of an $(m+\E)$-cycle and the $1$-factor $I$. 

If $L$ contains an $(m+\E,1)$-lasso  then we apply Lemma~\ref{Lemma:LassoToCycle_lambda} to $\P$ with $s=m$ to complete the proof. The assumptions of Lemma~\ref{Lemma:LassoToCycle_lambda} are satisfied because $\E+1\geq 2$, and if $m$ is even then $M$ contains no odd entries so $\E=2$ by (ii).

 So suppose $L$ does not contain an  $(m+\E,1)$-lasso. Then $m+\E$ is even and $L$ contains a component $H$ such that $H$ is the union of an $(m+\E)$-cycle and a $1$-factor on $V(H)$. We apply Lemma~\ref{Lemma:ChordToLasso_lambda} to $\P$ with $s=m+\E-1$ to obtain an $(M\setminus (m))$-packing $\P'$ of $\lambda K_v$ whose leave $L'$ contains a component $H'$ on $m+\E$ vertices that has $\frac{3}{2}(m+\E)$ edges and contains an $(m+\E-1,1)$-lasso. 
  If $\E=1$ then we can add the $m$-cycle of this lasso to $\P'$ to complete the proof. 
  Otherwise $\E=2$ and $H'$ contains an $(m+1)$-cycle with a chord because $m\geq 3$ and any vertex in this cycle has degree at least $3$ (note that $\deg_H(x)=3$ for each $x\in V(H)$).
Then we can apply Lemma~\ref{Lemma:ChordToLasso_lambda} with $s=m$ to $\P'$ to obtain an $(M\setminus (m))$-packing $\P''$ of $\lambda K_v$ whose leave contains an $(m,1)$-lasso. We add the $m$-cycle of this lasso to $\P''$ to complete the proof. 

\noindent {\bf Case 2b.} Suppose $m+\E>v$. Then $m\geq v-1$ and $\E=2$ (note that $\E$ is even if $m=v$).

If $m=v$ then $m_i\in\{2,v\}$ for all $i\in\{1,\ldots,\tau\}$, so $\lambda \binom{v}{2}-\frac{v}{2}\equiv {2+\sum_{m_i=2}m_i \mod{v}}$ by (ii) and hence ${2+\sum_{m_i=2}m_i}\leq (\lambda-1)\binom{v}{2}$ by (iii). 
Then by Theorem~\ref{Theorem:MultigraphDecomp} there exists an $(M,2)$-decomposition $\D$ of $\lambda K_v-I$. We remove a $2$-cycle from $\D$ to complete the proof.

So suppose that $m=v-1$. 
Since $\E$ is even, $M$ contains an even number of odd entries, so at least two entries of $M$ are equal to $v-1$. 
Let $\D_0$ be an $(M\setminus ((v-1)^2),v^2)$-decomposition of $\lambda K_v-I$ which exists by Theorem~\ref{Theorem:MultigraphDecomp}. Let $\P_0$ be the $(M\setminus ((v-1)^2),v)$-packing of $\lambda K_v$ formed by removing a $v$-cycle from $\D_0$. The leave $L_0$ of $\P_0$ is the union of a $v$-cycle and the $1$-factor $I$. 
Let $\P_1$ be the packing obtained by applying Lemma~\ref{Lemma:ChordToLasso} to $\P_0$ with $s=v-1$. Then the leave of $\P_1$ contains a $(v-1,1)$-lasso. We add the $(v-1)$-cycle of this lasso to $\P_1$ and remove a $v$-cycle to obtain an $(M\setminus (v-1))$-packing $\P_2$ of $\lambda K_v$. The leave of $\P_2$ has size $3\tfrac{v}{2}+1$.

By applying Lemma~\ref{Lemma:ChordToLasso} to $\P_2$ with $s=v-1$ we obtain an $(M\setminus (v-1))$-packing $\P_3$ of $\lambda K_v$ whose leave contains a $(v-1,1)$-lasso. We add the $(v-1)$-cycle of this lasso to $\P_3$ to complete the proof.
\end{proof}

\begin{lemma}\label{Lemma:lambda_even}
Let $m_1,m_2,\ldots,m_\tau$ be a nondecreasing list of integers  and let $\lambda$ and $v$ be positive integers with $\lambda$ even. Then there exists an $(m_1,m_2,\ldots,m_\tau)$-packing of $\lambda K_v$ if and only if 
\begin{itemize}
\item[(i)] $2\leq m_1\leq m_2,\ldots,m_\tau\leq v$;
\item[(ii)] $m_1+m_2+\cdots+m_\tau = \lambda\binom{v}{2}-\delta$, where $\delta$ is a nonnegative integer such that $\delta\neq 1$; and
\item[(iii)] $m_\tau\leq \left\{\begin{array}{ll}
\frac{\lambda}{2}\binom{v}{2}-\tau+2 & \text{if } \delta=0,\\[1mm]
\frac{\lambda}{2}\binom{v}{2}-\tau+1 & \text{if } 2\leq \delta<m_\tau.\\
\end{array}\right.$
\end{itemize}
\end{lemma}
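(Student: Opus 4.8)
The necessity of (i)--(iii) is already supplied by Lemma~\ref{Lemma:lambda_necessaryconditions}, so the plan is to prove sufficiency. Since $\lambda$ is even, $\lambda(v-1)$ is even, so Theorem~\ref{Theorem:MultigraphDecomp} produces honest cycle decompositions of $\lambda K_v$ (with no $1$-factor), and I would run the whole argument by a \emph{decompose-then-delete} method: exhibit a list $N$ of integers in $[2,v]$ with $\sum N=\delta$ such that the concatenated list $(m_1,\dots,m_\tau,N)$ satisfies the hypotheses of Theorem~\ref{Theorem:MultigraphDecomp}, take the resulting decomposition of $\lambda K_v$, and delete its $N$-cycles. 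The leave of the resulting $(m_1,\dots,m_\tau)$-packing is then the union of the deleted cycles, which has exactly $\delta$ edges, as required. Writing $h=\tfrac\lambda2\binom v2$ (so that $\lambda K_v$ has $2h$ edges) and $S=\sum_i m_i=2h-\delta$, the only nontrivial hypothesis of Theorem~\ref{Theorem:MultigraphDecomp} to check is the even-$\lambda$ bound $M+T-2\le h$, where $M=\max(m_\tau,\max N)$ and $T=\tau+|N|$ are the largest length and the number of cycles in the concatenated list. I would split the argument according to the size of $\delta$, matching the three regimes in condition (iii).

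When $\delta=0$ the list $N$ is empty and the conclusion is immediate, since (i)--(iii) with $\delta=0$ are exactly the hypotheses of Theorem~\ref{Theorem:MultigraphDecomp}. When $2\le\delta<m_\tau$ I would take the single cycle $N=(\delta)$; here $\delta<m_\tau\le v$, so $\delta$ is an admissible length, and since $\delta<m_\tau$ the maximum length is unchanged, $M=m_\tau$, while $T=\tau+1$. The bound to verify is therefore $m_\tau+\tau-1\le h$, which is precisely the rearrangement of the $2\le\delta<m_\tau$ case of condition (iii). Deleting the single $\delta$-cycle completes this regime.

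The substantive case is $\delta\ge m_\tau$, where condition (iii) is vacuous and the bound must be extracted from (i), (ii) and the slack $\delta\ge m_\tau$ alone. Here the key idea is to build $N$ from cycles of length $m_\tau$ (rather than length $v$), together with a single remainder cycle, so as to keep $M=m_\tau$ while making $|N|$ as small as possible. Writing $\delta=q\,m_\tau+r$ with $0\le r<m_\tau$ and $q\ge1$, I would take $N=(m_\tau^{\,q})$ if $r=0$, take $N=(r,m_\tau^{\,q})$ if $2\le r\le m_\tau-1$, and take $N=(2,m_\tau-1,m_\tau^{\,q-1})$ in the awkward case $r=1$ (which forces $m_\tau\ge3$, since $m_\tau=2$ makes every $m_i=2$ and hence $\delta$ even). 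In every subcase all parts lie in $[2,m_\tau]\subseteq[2,v]$, so $M=m_\tau$ and $|N|=\ceil{\delta/m_\tau}$, giving $T=\tau+\ceil{\delta/m_\tau}$. It then remains to verify $m_\tau+\tau+\ceil{\delta/m_\tau}-2\le h=\tfrac12(S+\delta)$.

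I expect this last inequality to be the main obstacle, and the delicate point is the interplay between the bound on $\tau$ and parity. Substituting the crude bound $\tau\le\tfrac12(S-m_\tau)+1$ (one cycle of length $m_\tau$, the rest of length $2$) reduces the inequality to $\ceil{\delta/m_\tau}\le\tfrac12(\delta-m_\tau)+1$, which one checks holds for every $\delta\ge m_\tau$ with $m_\tau\ge2$ \emph{except} at the single corner $\delta=m_\tau+1$, where it fails by exactly $\tfrac12$. The resolution is that $\delta=m_\tau+1$ forces $S-m_\tau=2h-2m_\tau-1$ to be odd, so that not all of the smaller cycles can have length $2$ and the bound on $\tau$ improves to $\tau\le\tfrac12(S-m_\tau+1)$; feeding this sharper estimate in restores the inequality (with equality) at that corner. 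Thus the hypothesis $\delta\ge m_\tau$ is precisely what the bound requires, and once it is confirmed Theorem~\ref{Theorem:MultigraphDecomp} supplies the decomposition and deletion of the $N$-cycles yields the packing. Notably, this route never invokes the lasso and chord modifications of Lemmas~\ref{Lemma:LassoToCycle_lambda} and~\ref{Lemma:ChordToLasso_lambda}, which are needed only in the odd case where the leave is forced to be an odd graph.
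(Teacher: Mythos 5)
Your proposal is correct and follows the same overall strategy as the paper's proof: pad the list with a list $N$ of entries in $[2,m_\tau]$ summing to $\delta$, apply Theorem~\ref{Theorem:MultigraphDecomp} (the $\lambda(v-1)$-even case, so no $1$-factor and no recourse to Lemmas~\ref{Lemma:LassoToCycle_lambda} and~\ref{Lemma:ChordToLasso_lambda}, exactly as you predict), and delete the $N$-cycles; your treatment of $\delta=0$ and of $2\le\delta<m_\tau$ via $N=(\delta)$ is identical to the paper's. Where you genuinely diverge is the choice of $N$ when $\delta\ge m_\tau$: the paper takes $N=(2^{(\delta-m_\tau)/2},m_\tau)$ or $(2^{(\delta-m_\tau+1)/2},m_\tau-1)$ according to the parity of $\delta-m_\tau$, i.e.\ it pads with $2$-cycles, whereas you pad with $m_\tau$-cycles so as to minimise $|N|$. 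The paper's choice guarantees $\sum N\ge m_\tau-1+2(s-1)$ where $s=|N|$, and combined with $\sum M\ge m_\tau+2(\tau-1)$ a single averaging estimate gives $\frac{\lambda}{2}\binom{v}{2}-\tau-s+2\ge m_\tau-\frac{1}{2}$, whence the required bound by integrality --- uniform, with no case analysis and no exceptional value of $\delta$. Your choice makes $s=\lceil\delta/m_\tau\rceil$ small but creates the genuine corner $\delta=m_\tau+1$, which you correctly repair via the parity-improved bound $\tau\le\frac{1}{2}(S-m_\tau+1)$ with $S=\sum_i m_i$; note only that your claim that $\lceil\delta/m_\tau\rceil\le\frac{1}{2}(\delta-m_\tau)+1$ fails \emph{solely} at $\delta=m_\tau+1$ is literally false when $m_\tau=2$ (it fails there at every odd $\delta\ge3$), though these cases are vacuous since, as you yourself observe, $m_\tau=2$ forces every $m_i=2$ and hence $\delta$ even. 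Both verifications are sound; the paper's $2$-cycle padding buys a shorter, exception-free check at the cost of a longer list $N$, which is harmless because for $\lambda$ even Theorem~\ref{Theorem:MultigraphDecomp} places no restriction on the number of $2$-cycles.
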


\begin{proof} 
If there exists an $(m_1,m_2,\ldots,m_\tau)$-packing $\P$ of $\lambda K_v$ with leave $L$, then conditions (i)--(iii) hold by Lemma~\ref{Lemma:lambda_necessaryconditions}. 
So it remains to show that if $\lambda$, $v$ and $m_1,m_2,\ldots,m_\tau$ satisfy (i)--(iii), then there exists an $(m_1,m_2,\ldots,m_\tau)$-packing of $\lambda K_v$. If $\delta=0$ then the result follows immediately from Theorem~\ref{Theorem:MultigraphDecomp}, so suppose $\delta\geq 2$.

Let \[N = \left\{ \begin{array}{l l}
(\delta) & \text{if } 2\leq \delta< m_\tau,\\ 
(2^{(\delta-m_\tau)/2},m_\tau) & \text{if } \delta\geq m_\tau \text{ and } \delta\equiv m_\tau\mod{2},\\
(2^{(\delta-m_\tau+1)/2},m_\tau-1) & \text{if } \delta\geq  m_\tau \text{ and }\delta\not\equiv m_\tau\mod{2}.\\
\end{array}
\right.\]
Note that in each case $\sum N=\delta$. We show that there exists an $(m_1,m_2,\ldots,m_\tau,N)$-decomposition $\D$ of $\lambda K_v$ because the assumptions of Theorem~\ref{Theorem:MultigraphDecomp} are satisfied by (i)--(iii) and the definition of $N$.  The required packing is then obtained by removing cycles of lengths $N$ from $\D$. 

Let $s$ be the number of entries in $N$. Let $M=m_1,m_2,\ldots,m_\tau$.
First observe that $\sum M+\sum N=\lambda\binom{v}{2}$ by (ii) and since $\sum N=\delta$.
By (i) and the definition of $N$ it also holds that $2\leq n\leq m_\tau\leq v$ for all $n\in N$. If $2\leq \delta<m_\tau$, then $m_\tau\leq \frac{\lambda}{2}\binom{v}{2}-\tau-s+2$ by (iii) and since $s=1$. If $\delta\geq m_\tau$, then because $\sum M\geq m_\tau+2(\tau-1)$ and $\sum N\geq m_\tau-1+2(s-1)$, it follows that 
\[\begin{array}{r l}
\frac{\lambda}{2}\binom{v}{2}-\tau-s+2& = \frac{1}{2}(\sum M +\sum N)-\tau-s+2\\[1mm]
 & \geq \frac{1}{2}(m_\tau+2(\tau-1)+ m_\tau-1+2(s-1) ) -\tau-s+2\\[1mm]
 & =  m_\tau-\frac{1}{2}.
\end{array}
\]
Therefore $\max(N,M)= m_\tau\leq \frac{\lambda}{2}\binom{v}{2}-\tau-s+2$ because $\frac{\lambda}{2}\binom{v}{2}-\tau-s+2$ is an integer. So by Theorem~\ref{Theorem:MultigraphDecomp} we can see that there exists an $(M,N)$-decomposition of $\lambda K_v$ which completes the proof.
\end{proof}

\vspace{0.3cm} \noindent{\large \bf Acknowledgements}

The author was supported by a Monash University Faculty of Science Postgraduate Publication Award.

\end{document}